\documentclass[11pt]{article}

\usepackage[top=1in, left=1in, bottom=1.5in, right=1.5in]{geometry}

\usepackage{hyperref}  
\usepackage{setspace} 

\usepackage{graphicx}
\usepackage{amsfonts}
\usepackage{amssymb}
\usepackage{amsmath}
\usepackage{amsthm}

\newtheorem{theorem}{Theorem}[section]

\newtheorem{lemma}[theorem]{Lemma}
\newtheorem{conjecture}[theorem]{Conjecture}

\newcommand{\del}{\backslash}

\newcommand{\cB}{\mathcal{B}}

\title{On tree decompositions whose trees are subgraphs}

\author{Rong Chen\ \ \ \ Enzi Liao\\
\\
Center for Discrete Mathematics,\ \ Fuzhou University\\
Fuzhou,\ \ P. R. China}

\begin{document}

\maketitle

\footnotetext[1]{Mathematics Subject Classification: 05C15, 05C17, 05C69

Email: rongchen@fzu.edu.cn
}

\begin{abstract}
Fix $k \in \mathbb{N}$ and let $G$ be a connected graph with treewidth at most $k$.
We say that $xy \notin E(G)$ is a {\em $k$-ghost-edge} of $G$ if for every tree decomposition $(T, \cB)$ of $G$ with width at most $k$, both $x$ and $y$ are contained in a bag of $(T, \cB)$. Moreover, if $G$ does not contain any $k$-ghost-edges, then  $G$ is {\em $k$-ghost-free}.
Hickingbotham proposed a conjecture that every connected $k$-ghost-free graph $G$ has a tree decomposition $(T, \cB)$ with width at most $k$ such that $T$ is a subgraph of $G$.
In this paper, we prove that Hickingbotham's conjecture is false for all $k\geq3$.

{\it\bf Key Words}: tree decompositions, treewidth, $k$-ghost-free graphs
\end{abstract}

\section{Introduction}

All graphs considered in this paper are finite and simple.
Tree decompositions and path decompositions are fundamental objects in graph theory. If a graph $G$ has large pathwidth and there is a tree decomposition $(T,\cB)$ of $G$ with small width and such that $T$ is a subtree of $G$, then $T$ will also have large pathwidth \cite{Hick19}. The crucial question is when does a graph $G$ have a tree decomposition with small width that is indexed by a subtree of $G$. More generally speaking, suppose that a graph $G$ has small treewidth, and consider all tree decompositions $(T,\cB)$ whose width is not too much larger than the optimum. To what extent can we choose or manipulate the ``shape'' of $T$?

For graphs with no long path, we can choose $T$ to also have no long path; this gives rise to
the parameter called treedepth \cite{NdP12}. Similarly, for graphs of bounded degree, we can choose $T$ to also have bounded degree \cite{DO95}; this relates to the parameters of congestion and dilation. Moreover, for graphs excluding any tree as a minor, we can choose $T$ to just be a path; this results in the parameter called pathwidth \cite{BRST91}. 
These results suggest a natural question: can we always find a tree decomposition whose tree $T$ is closely related to $G$ itself, while keeping the width within a function of the treewidth?
In 2019, Dvo\v{r}\'{a}k suggested one way of accomplishing this goal.

\begin{conjecture}(\cite{D19})\label{conj:1.1}
There exists a polynomial function $f$ such that every connected graph $G$ has a tree decomposition $(T, \cB)$ of width at most $f(tw(G))$ such that $T$ is a subgraph of $G$.
\end{conjecture}

When $G$ is a tree, Conjecture~\ref{conj:1.1} is obviously true.
Hickingbotham is the first one to study Conjecture~\ref{conj:1.1}.
Although he found a family of graphs satisfying Conjecture~\ref{conj:1.1}, he conjectured Conjecture~\ref{conj:1.1} wrong for graphs with treewidth 2. Blanco et al. in \cite{BCHHIM23} independently prove that Conjecture~\ref{conj:1.1} is not true for graphs with treewidth 2.


Ghost edges is a tool defined by Hickingbotham in \cite{Hick19} to construct graphs satisfying Conjecture~\ref{conj:1.1}.
Fix $k \in \mathbb{N}$ and let $G$ be a connected graph with $tw(G) \leq k$.
We say that $xy \notin E(G)$ is a {\em $k$-ghost-edge} of $G$ if for every tree decomposition $(T, \cB)$ of $G$ with width at most $k$, we have $x, y \in B_t$ for some $t \in V(T)$.
Although $k$-ghost-edges of $G$ are not in $E(G)$, they behave like real edges with respect to tree decomposition with width at most $k$. For any graph $G$ with treewidth at most $k$ and $xy\notin E(G)$, when there are at least $k+1$ internally vertex disjoint $(x,y)$-paths, Hickingbotham in \cite{Hick19} proved that $xy$ is a $k$-ghost-edge of $G$; while when there are at most $k$ internally vertex disjoint $(x,y)$-paths, he conjectured that $xy$ is not a $k$-ghost-edge of $G$, which was disproved in \cite{Chen} by the first author. 
If a graph $G$ with $tw(G) \leq k$ does not contain any $k$-ghost-edges, we say that $G$ is {\em $k$-ghost-free}. Although Hickingbotham conjectured Conjecture~\ref{conj:1.1} wrong, but he thought Conjecture~\ref{conj:1.1} is true for all connected $k$-ghost-free graphs.

\begin{conjecture}(\cite{Hick19}, Conjecture7.4.2.)\label{conj:1.3}
Fix $k \in \mathbb{N}$.
For every connected $k$-ghost-free graph $G$, there exists a tree decomposition $(T, \cB)$ with width at most $k$ such that $T$ is a subgraph of $G$.
\end{conjecture}

In this paper, we construct infinitely many counterexamples to Conjecture \ref{conj:1.3}, and prove 

\begin{theorem}
\label{thm:1.4}
Let $k,r$ be integers with $k \geq 3$ and $r\geq k+3$. There exists a connected $k$-ghost-free graph $G_{k,r}$ with treewidth $k$ such that if $(T, \cB)$ is a tree decomposition of $G_{k,r}$ such that $T$ is a minor of $G_{k,r}$, then $(T, \cB)$ has width at least $r-2$.
\end{theorem}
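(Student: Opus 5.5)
We will build $G_{k,r}$ from three requirements: treewidth exactly $k$; no $k$-ghost-edges; and every tree decomposition indexed by a minor $T$ of $G_{k,r}$ has a bag of size at least $r-1$. The basic obstruction is a high-degree star-like piece whose tree-shaped minors cannot separate a large set of vertices.

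\textbf{Construction.} Take a "hub" vertex $v$ and $r$ disjoint copies $H_1,\dots,H_r$ of a gadget, each joined to $v$. Each $H_i$ has treewidth $k$ (for example, a suitably glued chain of $K_{k+1}$'s minus edges, or a $k$-tree with certain edges deleted). The copies are joined to each other only through $v$ and through a long cycle or path $C$ passing through designated vertices $u_i\in H_i$. The gadget must satisfy two properties. First, inside $H_i$ the vertex $u_i$ has many neighbours. Second, any tree minor of $G$ has limited branching near the $u_i$. Then, in any decomposition $(T,\cB)$ with $T$ a minor of $G$, the subtrees $T_{u_1},\dots,T_{u_r}$ together with the subtree for $v$ are forced to share a common node. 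The constraint "$T$ is a minor of $G$" restricts the tree by vertex count and by the branching structure available in $G$: the branch sets of $T$ partition connected pieces of $G$, so a degree-$d$ node of $T$ needs a connected set in $G$ with $d$ disjoint outgoing connections. The aim is a Helly-type argument. The sets $T_{u_i}$ pairwise intersect, because $u_i$ and $u_j$ lie in a dense connected structure that is forced into a common bag once the tree cannot branch enough, so by the Helly property for subtrees one bag contains all $u_i$. That gives width at least $r-1$, up to the off-by-one which should give $r-2$.

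\textbf{Key steps.} (1) Show $\mathrm{tw}(G_{k,r})=k$ by exhibiting an explicit decomposition. $T$ is not required to be a minor here, so the long cycle can be handled by the usual path-of-bags trick, adding one vertex of $C$ to every bag; the gadgets must therefore have treewidth $k-1$ relative to that vertex, which is where $k\ge3$ enters. (2) Show $k$-ghost-freeness: for every non-edge $xy$, build a width-$k$ decomposition with no bag containing both. Since $G$ is symmetric, there are only a few types of non-edges: both ends in one gadget, in different gadgets, or involving $v$ or $C$. For each type, modify the decomposition from (1) by rerouting where $C$'s vertices are placed, using that there are at most $k$ internally disjoint $x$–$y$ paths and that the cycle can be cut at different places. (3) The lower bound: let $T$ be a minor of $G$ with models $\{X_t\}$. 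Use the Helly property for subtrees of a tree to show that a width below $r-2$ would force a separation of $G$ into pieces that $T$, being a minor of $G$, cannot realize. Concretely, take a centroid-type node $t$ whose bag separates the $u_i$. Since $|B_t|\le r-2$, some two gadgets are separated by $B_t$ while connected through $C$ and $v$, and we count the branches of $T$ at $t$ against the degree available in the model $X_t$.

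\textbf{Main obstacle.} Step (3), and designing the gadget so that (2) and (3) hold together, is the crux. Ghost-freeness needs flexibility in decompositions, while the lower bound needs rigidity whenever the tree must be a minor. I would first make the gadgets long thin paths of $k$-cliques, so that $G$ has low degree except at $v$. Then any tree minor with a node of degree $\ge r$ must place that node's branch set around $v$. Meanwhile a decomposition of width $<r-2$ must split the $r$ gadgets into separate branches at a node of degree at least about $r$ away from $v$, and this contradiction is what I would try to make precise.
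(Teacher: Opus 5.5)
There is a genuine gap. Step (3) is not an argument, and with the gadgets you propose it cannot become one, because the lower bound is false for your graph. The hypothesis that $T$ is a minor of $G$ puts no relation between a node $t$ and the contents of $B_t$. So the limited branching near the $u_i$ is irrelevant: a low-width decomposition needs no high-degree nodes, only a tree of a usable shape.

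Concretely, take hub $v$, gadgets $H_i$ that are thin paths of cliques with $u_i$ at one end, and the cycle $C$ through the $u_i$. Fix $c_0\in V(C)$. Let $S$ be the spanning tree formed by $C$ minus one edge at $c_0$, a Hamiltonian path $Q_i$ of each $H_i$ starting at $u_i$, and one edge at $v$. Define the bags as follows:
\begin{itemize}
\item put $v$ and $c_0$ into every bag;
\item at a node $x$ of $C$, add $x$ and its successor on the path;
\item along $Q_i$, run a path decomposition of $H_i$ whose first bag contains $u_i$, padded with repeated bags, and at $u_i$ take the union of the two rules.
\end{itemize}
This is a tree decomposition of $G$ indexed by a subgraph of $G$. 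Its width is $\mathrm{pw}(H_i)+O(1)=O(k)$, independent of $r$, so the required bound $r-2$ fails once $r$ is large. In particular the sets $T_{u_i}$ need not pairwise intersect, so the Helly step has nothing to stand on.

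Step (2) is likewise only a plan. The principle it leans on, that having at most $k$ internally disjoint $x$--$y$ paths lets you separate $x$ from $y$, is exactly Hickingbotham's conjecture, which the introduction notes has been disproved.

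What is missing is a mechanism that ties $B_t$ to $t$, together with a base graph whose every spanning tree is badly tangled.

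The paper gets the first from the tower-tree graph. By Lemmas~\ref{lem:2.5} and~\ref{lem:2.1}, a decomposition of $\widetilde{G}_r$ of width at most $r-3$ indexed by a minor yields a decomposition of $G_r$ of width at most $r-2$. It is indexed by a spanning tree $T$ of $G_r$ and has $t\in B_t$ for every $t$. For Lemma~\ref{lem:2.1} to apply, the tower should be built with respect to $r-3$ rather than $k$; this is harmless, since it is still $G_r$ plus pendant trees.

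It gets the second from the $r$-fold iterated parallel connection $G_r$ of a base such as $K_{k+1}$. Lemma~\ref{lem:3.2} gives, for every spanning tree, a matching of $r$ non-tree edges whose fundamental cycles share a vertex $x$. Since $t\in B_t$ forces the fundamental cycle of $ab$ into $T_a\cup T_b$, the bag $B_x$ meets all $r$ matching edges. Hence the width is at least $r-1$ (Lemma~\ref{lem:3.3}), a contradiction.

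Ghost-freeness is handled not by counting paths but by the inductive invariant ``strongly $k$-ghost-free''. It records, for each non-edge other than $uv$, a separating decomposition keeping $u,v$ together, and, if $uv$ is a non-edge, one with $u,v$ in adjacent bags of size at most $k$. This invariant survives parallel connection (Lemma~\ref{lem:3.1}; its bag $\{u,v,u_1,u_2\}$ is where $k\ge 3$ is used) and the addition of pendant trees (Lemma~\ref{lem:2.2}).
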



\section{Preliminaries}
For a positive integer $n$, set $[n]:= \{1, 2, \dots, n\}$.
The vertex set and edge set of a graph $G$ are denoted by $V(G)$ and $E(G)$, respectively.
For a vertex $u \in V(G)$, let $N_G(u)$ denote the set of neighbors of $u$ in $G$.
The complement of $G$ is denoted by $G^c$.

Let $G$ be a graph and $T$ be a tree.
Let $\cB:=\{B_t \subseteq V(G): t \in V(T)\}$ be a set of subsets of $V(G)$ indexed by the vertices of $T$.
Each $B_x$ is called a \emph{bag}.
Set $$T_v:=T[\{t \in V(T): v \in B_t\}].$$
The pair $(T, \cB)$ is a \emph{tree-decomposition} of $G$ if
\begin{enumerate}
    \item[(T1)] for each edge $ uv \in E(G)$, there is a vertex $t \in V(T)$ such that $u,v \in B_t$; and
    \item[(T2)] for each vertex $v\in V(G)$, the subgraph $T_v$ of $T$ is non-empty and connected.
\end{enumerate}
The \emph{width} of $(T, \cB)$ is then the maximum, over all $t\in V(T)$, of $|B_t|-1$.
The \emph{treewidth} of $G$, denoted by $tw(G)$, is the minimum width of a tree decomposition of $G$.

\begin{figure}[htbp]
\begin{center}
\includegraphics[height=7cm, page=1]{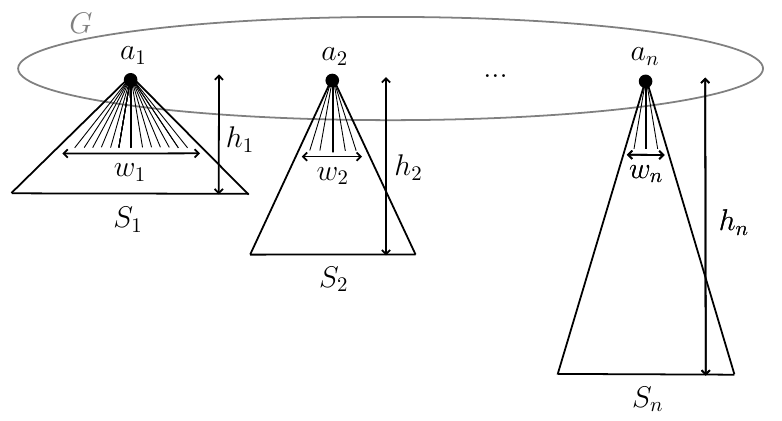}
\caption{The tower-tree graph $\widetilde{G}$ of $G$ obtained by attaching the complete $w_j$-ary tree $S_j$ of height $h_j$ to each vertex $a_j \in V(G)$.}
\label{tree}
\end{center}
\end{figure}

Fix a positive integer $k$, a graph $G$, and an arbitrary ordering $a_1, \ldots, a_n$ of the vertices of $G$. Let $\widetilde{G}$ be the graph which is constructed from $G$ as follows.
\begin{itemize}
\item First define integers $2 = h_1 \ll h_2 \ll \dots \ll h_n$ as follows.
Given $h_{j - 1}$, we define $h_j := (k + 2)^{2h_{j - 1}} + 1$.
\item Next define integers $(k + 1)n + 1 = w_n \ll w_{n - 1} \ll \dots \ll w_1$ and corresponding rooted trees $S_n, S_{n - 1}, \ldots, S_1$ as follows.
Given $w_j$, define $S_j$ to be the complete rooted $w_j$-ary tree of height $h_j$.
Then, given $w_n, w_{n - 1}, \ldots, w_{j + 1}$ and $S_n, S_{n - 1}, \ldots, S_{j + 1}$, define
\[w_j := (k + 1) \left( n + \sum_{i=j+1}^n |V(S_i)| \right) + 1.\]
\end{itemize}
Finally, let $\widetilde{G}$ be the graph which is obtained from the disjoint union of $G, S_1, S_2, \ldots, S_n$ by, for each $j \in [n]$, identifying $a_j$ with the root of $S_j$.
The resulting graph $\widetilde{G}$ is called the \emph{tower-tree} graph of $G$ with respect to $k$ and the ordering $a_1, \ldots, a_n$ of $V(G)$.
Note that the tower-tree graph $\widetilde{G}$ can be obtained from $G$ by adding pendant vertices one at a time.
Evidently, $tw(\widetilde{G}) = \max \{tw(G),1\}$. 

Lemmas \ref{lem:2.1} and \ref{lem:2.1} will be only used in the proof of Theorem \ref{thm:final}.

\begin{lemma}\label{lem:2.1}(\cite{BCHHIM23}, Lemma 3.3.)
Let $k$ be a positive integer, and let $G$ be a connected graph.
Let $\widetilde{G}$ be the tower-tree graph of $G$ with respect to $k$ and an ordering $a_1, \ldots, a_n$ of $V(G)$.
Suppose that $\widetilde{G}$ has a tree decomposition $(T', \cB')$ of width at most $k$ such that $T'$ is a spanning tree of $\widetilde{G}$.
Then there exists a tree decomposition $(T, \cB)$ of $G$ of width at most $k+1$ such that $T$ is a spanning tree of $G$ and for every $v \in V(G)$, we have $v \in T_v$.
\end{lemma}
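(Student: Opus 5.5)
The plan is to obtain $(T,\cB)$ by contracting every tower of $T'$ onto its root. First I would observe that every edge of $S_j$ is a bridge of $\widetilde{G}$, and $a_j$ is the only vertex of $S_j$ in $G$. Since $T'$ is a spanning tree of $\widetilde{G}$, it contains all of these edges. So $T'$ is the union of a spanning tree $T$ of $G$ with the trees $S_1,\dots,S_n$, each $S_j$ hanging from $T$ at $a_j$.

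Next I would define, for each $j\in[n]$,
\[
B_{a_j} := \{a_j\}\cup \Big( V(G)\cap \bigcup_{t\in V(S_j)} B'_t \Big).
\]
Conditions (T1) and (T2) then follow by routine checks. For (T1), an edge $uv$ of $G$ lies in some bag $B'_t$, and $t$ belongs to exactly one $S_j$ (with $a_j$ counted in $S_j$), so $u,v\in B_{a_j}$. For (T2), first ignore the extra vertex $a_j$: for $v\in V(G)$ the subtree $T'_v$ is connected, and contracting each $S_j$ to $a_j$ maps it onto a connected subtree of $T$. Adding $a_j$ to $B_{a_j}$ keeps $T_{a_j}$ connected once we know the original $T'_{a_j}$ meets $S_j$ or a neighbouring tower. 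Securing this is part of the structural claim below, or alternatively we reroute via the tree path and absorb the cost in the ``$+1$'' of the width.

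The real content is the width bound: $|B_{a_j}|\le k+2$, i.e.\ only a bounded set of vertices of $G$ can appear in bags indexed by the huge tree $S_j$. I would prove this by induction on $j$ from $n$ down to $1$ (or in the reverse order if that works more cleanly), using the choice of $w_j$ and $h_j$. The sets $B'_t$ have size at most $k+1$, and there are at most $n+\sum_{i>j}|V(S_i)|$ vertices outside $S_1\cup\dots\cup S_j$. Hence $w_j=(k+1)(n+\sum_{i>j}|V(S_i)|)+1$ guarantees that at every node of $S_j$ some child subtree has bags avoiding all ``foreign'' vertices, by pigeonhole. Then any vertex $v\in V(G)$ in a bag at $t\in S_j$ must have $T'_v$ pass through the root $a_j$ of $S_j$. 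Otherwise $T'_v$ would be confined to $S_j$ and could not reach an edge of $G$ at $v$; for $v\ne a_j$ it is connected to $G$ through $T$. This should show that all such $v$ lie in $B'_{a_j}$ itself, giving $|B_{a_j}|\le |B'_{a_j}|+1\le k+2$.

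The heights $h_j=(k+2)^{2h_{j-1}}+1$ would be used to rule out the alternative in which the pendant vertices of $S_j$ are covered by bags placed inside other towers. A long path in $S_j$ must be covered by bags along the tree path, and the smaller towers $S_i$ ($i<j$) have too few nodes, roughly $(k+2)^{2h_{j-1}}$ of them, to host them. I expect this height/pigeonhole step to be the main obstacle. It has to show simultaneously that the bags along $S_j$ contain only $G$-vertices already in $B'_{a_j}$ and that $a_j$ lies in a bag near $S_j$. I would first attempt it by following a root-to-leaf path in $S_j$ chosen greedily to avoid foreign vertices, as above, and then use the leaf edges of $S_j$ to force the corresponding bags to live inside $S_j$.
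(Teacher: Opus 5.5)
Your contraction $B_{a_j}=\{a_j\}\cup\bigl(V(G)\cap\bigcup_{t\in V(S_j)}B'_t\bigr)$ is the right object. Your checks of (T1) and (T2) are fine once $T'_{a_j}$ is known to meet $S_j$. The gap is the width bound. You claim that a $G$-vertex $v$ seen at a node of $S_j$ must have $T'_v$ through the node $a_j$, since otherwise it ``could not reach an edge of $G$ at $v$''. That reason is false. $T'_v$ may lie entirely inside one branch of $S_j-a_j$, with every edge of $G$ at $v$ covered there, because $v$'s neighbours can be hosted there too. Nothing forces $T'_v$ to contain the node $v$ of $T$; that is exactly the property the lemma is trying to produce. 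Your pigeonhole does not help either. Finding, at each node of $S_j$, one child branch free of foreign vertices says nothing about the other branches, which is where the width would blow up. As stated it is not even true, since the at most $k+1$ vertices of $B'_x$ may enter every branch. Your height heuristic is also wrong: for $i<j$, $S_i$ has more nodes than $S_j$ (indeed $w_i>|V(S_j)|$). What is small about $S_i$ is its height, not its size.

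The missing idea is one claim applied to every index: for each $i$, $T'_{a_i}$ contains a node of $S_i$. This gives everything at once. For $i\ne j$ the node $a_j$ separates $S_j-a_j$ from $S_i$ in $T'$, so any $a_i$ seen at a node of $S_j-a_j$ lies in $B'_{a_j}$. Hence $B_{a_j}\subseteq (B'_{a_j}\cap V(G))\cup\{a_j\}$, and $a_j$ is already in the contraction. To prove the claim, suppose $T'_{a_j}$ misses $V(S_j)$, and put $R_j:=V(T)\cup\bigcup_{i>j}V(S_i)$. Since $(k+1)|R_j|<w_j$, some child $c$ of $a_j$ has $T'_c\cap R_j=\emptyset$. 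As $T'_c$ meets $T'_{a_j}$, it lies in a single branch $K$ of $S_i-a_i$ for some $i<j$; for $j=1$ this is already a contradiction. Now descend: at each step choose a child not in $B'_{a_i}$, which is possible since $w_j>k+1$. Its subtree meets $K$ and avoids the node $a_i$, so it stays inside $K$. This gives a path of $h_j$ vertices of $S_j$ with a width-$\le k$ decomposition on $K$, a tree of radius less than $h_{j-1}$. Splitting at a central bag (at most $k+2$ segments, each inside a component of smaller radius) shows such a path has fewer than $(k+2)^{h_{j-1}}$ vertices. This contradicts $h_j=(k+2)^{2h_{j-1}}+1$. (The paper itself gives no proof and only cites Blanco et al.; the values of $w_j$ and $h_j$ fit exactly this argument.)
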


\begin{lemma}
\label{lem:2.2}
Let $k$ be a positive integer, and let $G$ be a connected graph.
Let $\widetilde{G}$ be the tower-tree graph of $G$ with respect to $k$ and an ordering $a_1, \ldots, a_n$ of $V(G)$.
If $G$ is k-ghost-free, then so is $\widetilde{G}$. 
\end{lemma}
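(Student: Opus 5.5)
We prove the statement by induction on the number of pendant vertices added. As noted above, $\widetilde{G}$ is obtained from $G$ by adding pendant vertices one at a time. So it suffices to prove the following claim. If $H$ is a connected $k$-ghost-free graph with $tw(H)\le k$ (with $k\ge 1$), and $H'$ is obtained from $H$ by adding a new vertex $p$ adjacent to exactly one vertex $u\in V(H)$, then $H'$ is $k$-ghost-free. Since $tw(H')=\max\{tw(H),1\}\le k$, the notion makes sense for $H'$. Note also that every intermediate graph in the construction is connected with treewidth at most $k$, so the claim can be applied repeatedly.

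To prove the claim, take a non-edge $xy$ of $H'$. We must exhibit a tree decomposition of $H'$ of width at most $k$ in which no bag contains both $x$ and $y$. There are two cases.

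\emph{Case 1: $x,y\in V(H)$.} Since $H$ is $k$-ghost-free, there is a tree decomposition $(T,\cB)$ of $H$ of width at most $k$ in which no bag contains both $x$ and $y$. Pick a node $t$ with $u\in B_t$. Attach a new leaf $t'$ to $t$ with bag $B_{t'}=\{u,p\}$, which has size $2\le k+1$. Conditions (T1) and (T2) clearly hold. The new bag contains neither $x$ and $y$ together, because it contains $p\notin\{x,y\}$ and only one other vertex.

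\emph{Case 2: one endpoint is $p$, say $x=p$, so $y\ne u$.} Take any tree decomposition $(T,\cB)$ of $H$ of width at most $k$. Since $H$ has a vertex other than $u$ (namely $y$), such a decomposition exists. Again attach a leaf $t'$ at a node containing $u$, with bag $\{u,p\}$. The vertex $p$ lies only in $B_{t'}$, and $y\notin B_{t'}$. Hence $p$ and $y$ never share a bag.

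In both cases the decomposition has width at most $k$ and separates $x$ and $y$, so $xy$ is not a $k$-ghost-edge. This proves the claim, and induction along the pendant-vertex construction of $\widetilde{G}$ gives the lemma. The only point needing care is that the hypotheses persist at each step. Every intermediate graph is connected, and its treewidth stays at most $k$ because adding a pendant vertex does not increase treewidth beyond $\max\{\cdot,1\}$ and $k\ge1$. Connectivity is only needed because it is part of the definition, and pendant additions preserve it. There is no real obstacle; the argument is a direct leaf-extension of decompositions.
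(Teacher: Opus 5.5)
Your proof is correct and is essentially the paper's argument: the paper attaches all the trees $S_j$ at once, with bags consisting of a vertex and its parent, to a width-$\le k$ decomposition of $G$ separating $x$ and $y$, and you do the same leaf-extension one pendant vertex at a time by induction. One small correction and one small advantage: in Case~2 the decomposition exists because $tw(H)\le k$, not because $H$ has a vertex other than $u$; and because you use an arbitrary decomposition there, your version also covers the case where $G$ has no non-edges, which the paper leaves implicit since it builds $(T',\cB')$ starting from a non-edge of $G$.
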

\begin{proof}
Let $S_1, S_2, \ldots, S_n$ be defined as the definition of $\widetilde{G}$, except that we view each tree $S_j$ as an induced subgraph of $\widetilde{G}$, which is rooted as $a_j$.
For $j\in [n]$, let $$\cB_j:=\{\text{the union of}\ u\ \text{and its parent in}\ S_j:\ u \in V(S_j)\}.$$
Note that the parent of $a_j$ in $S_j$ is empty.
Then $(S_j, \cB_j)$ is a tree decomposition of $S_j$.

Now we prove that $\widetilde{G}$ is $k$-ghost-free.
Since $G$ is $k$-ghost-free, for any $xy \notin E(G)$, there exists a tree decomposition $(T, \cB)$ of $G$ with width $\leq k$ such that $x, y$ are not in a bag of $(T, \cB)$.
For any $i \in [n]$, assume that $t_i$ is a vertex of $T$ with $a_i \in B_{t_i}$.
Let $T'$ be the graph which is obtained from a disjoint union of $T, S_1, S_2, \ldots, S_n$ by adding edges $t_ia_i$ for each $i \in [n]$. Set $\cB':= \cB \cup \bigcup_{j=1}^{n}\cB_j$.
Clearly, $(T', \cB')$ is a tree decomposition of $\widetilde{G}$ with width $\leq k$ and satisfies that
\begin{itemize}
\item $x, y$ are not in a bag of $\cB'$, so $xy$ is not a $k$-ghost-edge of $\widetilde{G}$; and
\item for every $i \in [n]$ and every $uv \in E(S_i^c)$, $u$ and $v$ are not in a bag of $\cB'$, so $uv$ is not a $k$-ghost-edge of $\widetilde{G}$; and
\item for every $uv \in E(\widetilde{G}^{c}) \setminus (E(G^c) \cup \bigcup_{j=1}^{n} E(S_{j}^{c}))$, the vertices $u$ and $v$ are not in a bag  of $\cB'$, so $uv$ is not a $k$-ghost-edge of $\widetilde{G}$.
\end{itemize}
Hence, $\widetilde{G}$ is $k$-ghost-free.
\end{proof}

\section{Proof of Theorem~\ref{thm:1.4}}

For each integer $1\leq i\leq2$, let $(G_i, u_i, v_i)$ denote a graph $G_i$ with a special pair of vertices $u_i, v_i\in V(G_i)$ and $V(G_1) \cap V(G_2)=\emptyset$. We say that $(G, u, v) $ is a \emph{parallel connection} of $(G_1, u_1, v_1)$ and $(G_2, u_2, v_2)$, denoted by $(G_1, u_1, v_1; G_2, u_2, v_2)$, if $G$ is obtained from $G_1 \cup G_2$ by adding two new vertices $u, v$ such that $N_G(u)=\{u_1, u_2\}$ and $N_G(v)=\{v_1, v_2\}$, see Figure \ref{H}.


\begin{figure}[htbp]
\begin{center}
\includegraphics[height=3cm, page=2]{figure.pdf}
\caption{A parallel connection of $(G_1, u_1, v_1)$ and $(G_2, u_2, v_2)$.}
\label{H}
\end{center}
\end{figure}

We say that $(G, u, v)$ is \emph{strongly $k$-ghost-free} if the following statements hold.
\begin{enumerate}
\item[(1)] $G$ is $k$-ghost-free.
\item[(2)] For each $xy \notin E(G) \cup \{uv\}$, there exists a tree decomposition $(T,\cB)$ of $G$ with width $\leq k$ such that $x, y$ are not in a bag of $(T,\cB)$, and such that $u, v$ are in a bag.
\item[(3)] When $uv\notin E(G)$, there exists a tree decompositions $(T, \cB)$ of $G$ with width $\leq k$, with $u \in B_{a}$, $v \in B_{b}$, and with $ab \in E(T)$, and with $|B_{a}|, |B_{b}| \leq k$,  and such that $u, v$ are not in a bag.
\end{enumerate}
Note that for any two distinct vertices $u, v$ of a complete graph $K_{k+1}$ on $k+1$ vertices, both $(K_{k+1},u,v)$ and $(K_{k+1}\del uv,u,v)$ are strongly $k$-ghost-free.

\begin{lemma}\label{lem:3.1}
Let $k \geq 3$ be an integer.
If $(G_i, u_i, v_i)$ is strongly $k$-ghost-free for each integer $1\leq i\leq2$, then so is their parallel connection $(G, u, v)$. 
\end{lemma}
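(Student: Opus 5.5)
We verify the three conditions for $(G,u,v)$ by explicitly gluing tree decompositions of $G_1$ and $G_2$, supplied by the strong $k$-ghost-freeness of $(G_i,u_i,v_i)$, together with a few small new bags containing $u$, $v$ and their neighbours. Since $u,v$ are new vertices with $N_G(u)=\{u_1,u_2\}$ and $N_G(v)=\{v_1,v_2\}$, we have $uv\notin E(G)$. So (1) follows once we have (2) for every non-edge $xy\ne uv$ and (3) for the pair $uv$. Throughout, $k\ge 3$ lets us use bags of size $4$.

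\emph{Basic gluing.} Let $(T_i,\cB_i)$ be a decomposition of $G_i$ of width $\le k$ with $u_i,v_i$ in a common bag $B_{t_i}$. Such a decomposition exists by (2) for $G_i$, or trivially if $u_iv_i\in E(G_i)$. Form the path $t_1 - s_1 - s_2 - t_2$ with $B_{s_1}=\{u_1,v_1,u,v\}$ and $B_{s_2}=\{u,v,u_2,v_2\}$. This is a decomposition of $G$ of width $\le k$ in which $u,v$ share a bag.

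\emph{Condition (2): cases for a non-edge $xy\neq uv$.}
\begin{itemize}
\item If $x,y\in V(G_1)$ and $xy\ne u_1v_1$, take $(T_1,\cB_1)$ from (2) for $G_1$, which separates $x,y$ and keeps $u_1,v_1$ together, and glue as above. The case $x,y\in V(G_2)$ is symmetric.
\item If $x\in V(G_1)$ and $y\in V(G_2)$, any basic gluing separates them.
\item If $x=u$ and $y\in V(G_1)\setminus\{u_1,v_1\}$ (or symmetrically $y\in V(G_2)$), the basic gluing works, because $u$ lies only in $s_1,s_2$.
\item If $xy=uv_1$ (or one of its symmetric variants), use the path $t_1-\{u_1,v_1,v\}-\{u_1,u,v\}-\{u,v,u_2,v_2\}-t_2$. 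It keeps $u,v$ together, separates $u$ from $v_1$, and still covers all edges at $u$ and $v$.
\item If $xy=u_1v_1\notin E(G_1)$, use (3) for $G_1$: $u_1\in B_a$, $v_1\in B_b$, $ab\in E(T_1)$, $|B_a|,|B_b|\le k$, and $u_1,v_1$ share no bag. Subdivide $ab$ by a new node $c$ with $B_c=(B_a\cap B_b)\cup\{u,v\}$. Since $u_1\notin B_b$ we have $|B_a\cap B_b|\le k-1$, so $|B_c|\le k+1$. Add $u$ to $B_a$ and $v$ to $B_b$, giving sizes $\le k+1$. Finally attach $\{u,v,u_2,v_2\}$ to $c$ and to $t_2$. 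Then $T_u$ and $T_v$ are connected, all edges are covered, $u_1,v_1$ stay apart, and $u,v$ share the bag $B_c$.
\end{itemize}
The remaining non-edges with both ends outside $\{u,v\}$ and on one side are handled by the same templates.

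\emph{Condition (3).} This is the step I expect to be the main obstacle. We need adjacent bags $a,b$ of size $\le k$ with $u\in B_a$, $v\in B_b$, and $u,v$ never together. The naive choice $B_a=\{u,u_1,u_2\}$, $B_b=\{v,v_1,v_2\}$ fails, because the vertices $v_1,v_2$ of $B_b$ must reach the rest of $T_1$ and $T_2$ without passing through $a$. So I would not keep $u_i,v_i$ together. Instead I would use (3) for $G_1$ and for $G_2$ (when $u_iv_i\notin E(G_i)$), obtaining adjacent bags $a_i\ni u_i$ and $b_i\ni v_i$ of size $\le k$. The plan is to route the new tree so that the $u$-side bags $a_1,a_2$ are joined through a chain containing $u$, the $v$-side bags $b_1,b_2$ through a chain containing $v$, and the separators $B_{a_i}\cap B_{b_i}$ (each of size $\le k-1$) are carried along a common path. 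Concretely, I would first try deleting the edges $a_1b_1$ and $a_2b_2$ and inserting a path $a_1-\alpha-\beta-b_1$ with $B_\alpha\supseteq (B_{a_1}\cap B_{b_1})\cup\{u\}$ and $B_\beta\supseteq(B_{a_1}\cap B_{b_1})\cup\{v\}$, and then hanging $G_2$ off $\alpha$ and $\beta$ symmetrically. The counting must be checked against the bound $k+1$, which is where $k\ge3$ and the slack $|B_{a_i}|,|B_{b_i}|\le k$ should be needed. If $u_iv_i\in E(G_i)$, a case split using a bag containing the edge $u_iv_i$ is required.
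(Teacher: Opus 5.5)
Your handling of condition (2) is sound; for the pair $u_1v_1$, subdividing the edge $ab$ of the decomposition from (3) for $G_1$ is a valid alternative to the paper's construction. The genuine gap is condition (3), which you only sketch. With it goes condition (1) for the non-edge $uv$: you never exhibit a decomposition of width at most $k$ in which $u$ and $v$ share no bag.

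The sketch also cannot work as stated. Suppose the tree edge $\alpha\beta$ separates $T_u$ from $T_v$, and the two halves of $G_2$'s decomposition (split at $a_2b_2$) hang off $\alpha$ and $\beta$. Then every vertex of $(B_{a_1}\cap B_{b_1})\cup(B_{a_2}\cap B_{b_2})$ must lie in both $B_\alpha$ and $B_\beta$. Moreover, (3) demands $|B_\alpha|,|B_\beta|\le k$, not $k+1$. Take $G_1=G_2=K_{k+1}\setminus u_iv_i$, which the paper notes is strongly $k$-ghost-free. There each $B_{a_i}\cap B_{b_i}$ is forced to contain all $k-1$ common neighbours of $u_i,v_i$, so you would need $|B_\alpha|\ge 2k-1>k$. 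And when $u_iv_i\in E(G_i)$ (e.g.\ $G_i=K_{k+1}$), you do not have (3) for $G_i$ at all.

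The missing idea is the opposite of your conclusion. Keep $u_i,v_i$ together inside $G_i$, and use attachment vertices themselves as a $2$-vertex $(u,v)$-separator, namely $\{v_1,u_2\}$. Take decompositions $(T_i,\cB_i)$ of $G_i$ of width at most $k$ with $u_i,v_i\in B_{s_i}$. Add a path $a\,b\,c\,d$ with bags
$B_a=\{u,u_1,v_1\}$, $B_b=\{u,v_1,u_2\}$, $B_c=\{v,v_1,u_2\}$, $B_d=\{v,u_2,v_2\}$,
joined to $T_1$ by the edge $s_1a$ and to $T_2$ by the edge $s_2d$. Then:
\begin{itemize}
\item $T_u=\{a,b\}$ and $T_v=\{c,d\}$ are disjoint;
\item $T_{v_1}$ runs from $T_1$ through $a,b,c$, and $T_{u_2}$ runs from $T_2$ through $d,c,b$;
\item the edges $uu_1,uu_2,vv_1,vv_2$ lie in $B_a,B_b,B_c,B_d$ respectively;
\item $b,c$ are adjacent bags of size $3\le k$, which is exactly where $k\ge 3$ is used.
\end{itemize}
This gives (3), and hence (1) for $uv$.

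Your naive attempt failed only because it put $u_1,u_2$ in one bag and $v_1,v_2$ in the other. Carrying $v_1$ forward on $u$'s side and $u_2$ back on $v$'s side fixes it.

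One small point to patch in your basic gluing. If $u_iv_i\notin E(G_i)$ and $G_i$ has no other non-edge, (2) for $G_i$ gives you nothing. In that case obtain a decomposition with $u_i,v_i$ together from (3) for $G_i$ by adding $v_i$ to $B_{a_i}$. The bag then has size at most $k+1$, and $T_{v_i}$ stays connected through $b_i$.
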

\begin{proof}
For each integer $1\leq i\leq2$ and each $x_iy_i \notin E(G_i) \cup \{u_iv_i\}$, let $(T_i, \cB_i)$ be a tree decomposition of $G_i$ with width $\leq k$  such that $x_i, y_i$ are not in a bag of $(T,\cB)$, and such that $u_i, v_i \in B_{s_i}$ for some $s_i \in V(T_i)$; and
let $(T_{2+i}, \cB_{2+i})$ be a tree decompositions of $G_i$ with width $\leq k$, with $u_i \in B_{a_i}$, $v_i \in B_{b_i}$, and with $a_ib_i \in E(T_{2+i})$, and with $|B_{a_i}|, |B_{b_i}| \leq k$,  and such that $u_i, v_i$ are not in a bag. Let $P:=abcd$ be a 4-vertex path.
Set
\[B_{a}:= \{u, u_1, v_1\},\ B_b:=\{u, v, v_1\},\ B_c:= \{u, v, v_2\},\ B_d:= \{u, u_2, v_2\},\]
\[\  B'_b:= \{u, v_1, u_2\},\ B'_c:= \{v, v_1, u_2\},\ B'_d:= \{v, u_2, v_2\},\]
\[\cB:=\{B_a, B_b, B_c, B_d\}\cup \cB_1\cup\cB_2,\ \ \cB':=\{B_a, B'_b, B'_c, B'_d\}\cup \cB_1\cup\cB_2.\]
Let $T$ be the tree obtained from $P \cup T_1\cup T_2$ by adding edges $s_1a$ and $s_2d$.
Then $(T, \cB)$ and $(T, \cB')$ are tree decompositions of $G$ with width $\leq k$ and satisfy that
\begin{itemize}
\item (3) holds for $G$ by considering $(T, \cB')$; and
\item for each integer $1\leq i\leq2$, no bag of $(T, \cB)$ contains $x_i,y_i$; and
\item $u, v$ are in a bag of $(T, \cB)$; and
\item no bag of $(T, \cB)$ contains $x,y$ for any $x \in V(G_1)$ and $y \in V(G_2)$; and 
\item no bag of $(T, \cB)$ contains $v, x$ for any $x \notin N_{G}(v) \cup \{u\}$.  
\end{itemize}
Moreover, by the symmetry between $u$ and $v$, there is a tree decomposition of $G$ with width $\leq k$ such that $u, x$ are not in a bag for any $x \notin N_{G}(u)$.
Hence, to prove that (1) and (2) are true, by the symmetry between $u_1, v_1$ and $u_2, v_2$, it suffices to show that 
\begin{itemize}
\item[(4)] when $u_1v_1 \notin E(G)$, there exists a tree decomposition $(T, \cB)$ of $G$ with width $\leq k$ such that $u_1,v_1$ are not in a bag of $(T, \cB)$, and $u, v$ are in a bag.
\end{itemize}

Let $P=ab$ be a 2-vertex path. Set
\[B_a: = \{u, v, u_1, u_2\},\ \ B_b: = \{v, u_2, v_2\}.\]
Let $T$ be a tree obtained from $P \cup T_2\cup T_3$ by adding edges $a_1a$ and $s_2b$.
Let $\cB'_3$ be obtained from $\cB_3$ by replacing $B_{a_1}$ with $B_{a_1}\cup\{v\}$ and replacing $B_{b_1}$ with $B_{b_1}\cup\{v\}$. Then $(T, \cB'_3 \cup \{B_a, B_b\})$ is a tree decomposition of $G$ satisfying (4) as $|B_{a_1}|, |B_{b_1}| \leq k$.
\end{proof}

Given $(G_0, u_0, v_0)$, define \[(G_r, u_r, v_r): = (G_{r-1}, u_{r-1}, v_{r-1}; G_{r-1}, u_{r-1}, v_{r-1}),\] for any integer $r\geq1$. We say that $(G_r, u_r, v_r)$  is a \emph{$r$-th parallel connection} of $(G_0, u_0, v_0)$. When there is no need to emphasis $(G_0, u_0, v_0)$ (or $u_0, v_0$), we will only say that $(G_r, u_r, v_r)$  is a $r$-th parallel connection of some graph (or $G_0$). 

Let $T$ be a spanning tree of a graph $G$. For any $e \in E(G) \setminus E(T)$, let $C_T^e$ denote the unique cycle in $T\cup\{e\}$ containing $e$. 

The proofs of Lemmas \ref{lem:3.2} and \ref{lem:3.3} follow ideas from (\cite{BCHHIM23}, Lemma 4.4.).

\begin{lemma}\label{lem:3.2}
Let $(G_r, u_r, v_r)$  be a $r$-th parallel connection of some graph $G$ and $T$ a spanning tree of $G_r$. Then there exists a matching $M \subseteq E(G_r) \setminus E(T)$ of size $r$ such that
\[\bigcap_{e \in M} V(C_T^e) \neq \emptyset.\]
\end{lemma}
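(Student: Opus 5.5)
We argue by induction on $r$, but we first strengthen the statement so that it survives the recursion. Write $G_r^{+}:=G_r+u_rv_r$. The stronger claim is: \emph{for every spanning tree $T$ of $G_r$ or of $G_r^{+}$, there is a matching $M\subseteq E(G_r)\setminus E(T)$ of size $r$ such that $\bigcap_{e\in M}V(C_T^e)\neq\emptyset$.} For $r=0$ this is trivial. Here $C_T^e$ is the fundamental cycle, and it may use the virtual edge $u_rv_r$ when that edge lies in $T$.

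For the induction step, write $G_r$ as the parallel connection of two copies $H_1,H_2$ of $G_{r-1}$, with attachment vertices $u^i,v^i$ in $H_i$. The new vertices are $u=u_r$ and $v=v_r$, with $N(u)=\{u^1,u^2\}$ and $N(v)=\{v^1,v^2\}$, and possibly the extra edge $uv$. The set $\{u^i,v^i\}$ separates $H_i$ from the rest of the graph. Hence for each $i$ the restriction $T[V(H_i)]$ is one of two things:
\begin{itemize}
\item a spanning tree of $H_i$, or
\item a forest with exactly two components, one containing $u^i$ and the other $v^i$, joined in $T$ by a path $Q_i$ outside $H_i$.
\end{itemize}
In the second case, replacing $Q_i$ by a virtual edge $u^iv^i$ gives a spanning tree $T_i$ of $H_i^{+}$. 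Either way the induction hypothesis applies to $H_i$ with a tree $T_i$. Every fundamental cycle $C_{T_i}^e$ with $e\in E(H_i)$ becomes $C_T^e$ once the virtual edge (if used) is replaced by $Q_i$. So the hypothesis gives a matching $M_i\subseteq E(H_i)\setminus E(T)$ of size $r-1$ whose $T$-cycles share a vertex $x_i$.

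We then need one further non-tree edge $f$, disjoint from the edges of $M_i$, whose cycle $C_T^f$ passes through $x_i$. A counting argument on the five edges $uu^1,uu^2,vv^1,vv^2$ (and $uv$) is the tool. A spanning tree of the part of $G_r^{(+)}$ outside the interiors of $H_1,H_2$ cannot use all of these edges without closing a cycle through some $H_i$. The case analysis then goes as follows.
\begin{itemize}
\item If some $T[V(H_i)]$ is disconnected, the outside path $Q_i$ runs through $u$ and $v$ and through the other copy $H_j$, or through the edge $uv$.
\item In that situation we choose $i$ and a missing connector edge $f$ so that $C_T^f$ contains the whole $T$-path from $u^i$ to $v^i$ inside $H_i$ together with $u$ or $v$. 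Then $C_T^f$ covers every vertex that a virtual-edge cycle in $H_i$ could pass through.
\item If both $T[V(H_i)]$ are spanning trees, exactly one or two of the connector edges are missing, and the cycle of a missing one traverses the tree path from $u^i$ to $v^i$ in some $H_i$.
\end{itemize}

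The main obstacle is making $x_i\in V(C_T^f)$ and the matching condition hold at the same time. The common vertex $x_i$ supplied by induction need not lie on the $u^i$–$v^i$ tree path in $H_i$. Also, $f$ is incident to $u^i$ or $v^i$, which may already be covered by $M_i$. To handle this, we strengthen the hypothesis further. We will require that either the common vertex lies on the $T_i$-path between $u_{r-1}$ and $v_{r-1}$, or that the cycles all use the virtual edge, and in addition that $M$ avoids $u_r$ and $v_r$. We first try to choose $f$ as a connector incident to the \emph{other} copy $H_j$ or to $u,v$ only, such as $uv$ or $uu^j$, so that disjointness from $M_i\subseteq E(H_i)$ is automatic. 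We then check that every configuration of missing connectors admits such an $f$ whose cycle crosses $H_i$ through $u^i$ and $v^i$.
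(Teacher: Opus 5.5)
\textbf{There is a genuine gap: the strengthened induction hypothesis you set up is false, so the induction cannot close.}

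\emph{The avoidance clause.} The clause ``$M$ avoids $u_r$ and $v_r$'', imposed in general, fails already for $r=1$. Take $G_0=K_2$, so $G_1$ is the $6$-cycle $u_1p_1q_1v_1q_2p_2u_1$. For $T=G_1-u_1p_2$, the only edge of $E(G_1)\setminus E(T)$ is $u_1p_2$, and it meets $u_1$.

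\emph{The extension to $G_r^{+}$.} This fails for $r=2$, in every variant you state. Let $H_A,H_B$ be the two copies of $G_1$ in $G_2$, with attachment vertices $U^A,V^A$ and $U^B,V^B$, and put $U=u_2$, $V=v_2$. Let $T$ consist of $UV$, $VV^A$, $VV^B$, together with each $6$-cycle $H_A$, $H_B$ minus one edge $e_A$ (resp.\ $e_B$) incident to $U^A$ (resp.\ $U^B$). Then $E(G_2)\setminus E(T)=\{UU^A,UU^B,e_A,e_B\}$. The only pairs that are matchings are $\{UU^A,e_B\}$, $\{UU^B,e_A\}$ and $\{e_A,e_B\}$. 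In each pair the fundamental cycles are disjoint, since one lies in $\{U,V\}\cup V(H_A)$ or in $V(H_A)$ and the other in $V(H_B)$, or vice versa. So no matching of size $2$ with a common vertex exists at all.

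This is exactly your case ``both restrictions are spanning trees, two connectors missing'', and it cannot be handled. Apart from this, the decisive step (``every configuration of missing connectors admits such an $f$'') is only asserted, never checked.

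\textbf{The missing idea is to always recurse into the copy that carries the tree path.} Then no virtual edges and no avoidance clause are needed. Work only with spanning trees $T$ of $G_r$. Keep just the strengthening you already spotted: the common vertex (the paper uses a common edge) lies on the $T$-path $P$ from $u_r$ to $v_r$.

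\begin{enumerate}
\item Since $N(u_r)=\{u^1,u^2\}$ and $N(v_r)=\{v^1,v^2\}$, all interior vertices of $P$ lie in one copy, say $H_1$.
\item Then $T[V(H_1)]$ is a genuine spanning tree of $H_1$, and its $u^1$--$v^1$ path is $P$ minus its ends. So the induction hypothesis applies there directly.
\item $T-V(H_1)$ has exactly two components, one containing $u_r$ and one containing $v_r$. Since $G_r-V(H_1)$ is connected, some edge $f\notin E(T)$ of $G_r-V(H_1)$ joins these components.
\item Its fundamental cycle $C_T^f$ contains all of $P$, and hence the common vertex. Because $f$ has no end in $H_1$, $M_1\cup\{f\}$ is automatically a matching.
\end{enumerate}

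This $f$ may well be $u_ru^2$ or $v_rv^2$. That is why the avoidance clause is neither true nor needed.
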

\begin{proof}
Let $P$ be the unique $(u_r, v_r)$-path in $T$. To prove the lemma, it suffices to show that {\bf (a)} there exists a matching $M \subseteq E(G_r) \setminus E(T)$ of size $r$ such that $\bigcap_{e \in M}E(C_T^e) \cap E(P) \neq \emptyset$.

We prove (a) by induction on $r$.
Let $C_1, C_2$ be the components of $G_r\del\{u_r, v_r\}$. Then $C_1, C_2$ are isomorphic to $G_{r-1}$, which is the $(r-1)$-th parallel graph of  $G$.
Without loss of generality assume that all interior vertices of $P$ are contained in $C_1$. Then $T\del V(C_1)$ has exactly two components $T_1,T_2$ such that $|V(T_1)\cap\{u_r, v_r\}|=|V(T_2)\cap\{u_r, v_r\}|=1$. Let $e_r$ be an edge of $G_r$ linking  $T_1,T_2$. Then $V(e_r)\cap V(C_1)=\emptyset$ and $P\subset C_T^{e_r}$. Hence, when $r=1$, set $M:=\{e_r\}$, implying (a) holding. Assume that $r>1$. Set $T':=T\del (V(C_2)\cup\{u_r, v_r\})$. 
Since $T'$ is a spanning tree of $C_1$ that is isomorphic to $G_{r-1}$, by induction, there exists a matching $M' \subseteq E(C_1) \setminus E(T')$ of size $r-1$ such that $\bigcap_{e \in M'}E(C_{T'}^e) \cap E(P\del \{u_r, v_r\}) \neq \emptyset$. Hence, $\bigcap_{e \in M'\cup\{e_r\}}E(C_{T'}^e) \cap E(P) \neq \emptyset$ as $P\subset C_T^{e_r}$. Moreover, since $V(e_r)\cap V(C_1)=\emptyset$, we have that $M'\cup\{e_r\}$ is a matching of $G_r$ of size $r$, so (a) holds.
\end{proof}

\begin{lemma}\label{lem:3.3}
Let $G_r$  be a $r$-th parallel graph of some graph.
If $(T, \cB)$ is a tree decompsition of $G_r$ such that $T$ is a spanning tree of $G_r$, and $v \in T_v$ for every $v \in V(G_r)$, then the width of $(T, \cB)$ is at least $r-1$.
\end{lemma}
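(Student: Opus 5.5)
We will combine Lemma~\ref{lem:3.2} with the standard argument from (\cite{BCHHIM23}, Lemma 4.4). Since $T$ is a spanning tree of $G_r$, Lemma~\ref{lem:3.2} gives a matching $M=\{x_1y_1,\dots,x_ry_r\}\subseteq E(G_r)\setminus E(T)$ and a vertex $z$ with $z\in V(C_T^{e})$ for every $e\in M$. We will show that the bag $B_z$ contains at least one endpoint of each edge $x_iy_i$. Because $M$ is a matching, these endpoints are pairwise distinct, so $|B_z|\ge r$ and the width is at least $r-1$.

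Fix $e=x_iy_i\in M$. By (T1) there is a node $t\in V(T)$ with $x_i,y_i\in B_t$. By (T2) together with the hypothesis $x_i\in T_{x_i}$, the subtree $T_{x_i}$ contains both the node $x_i$ and the node $t$, and hence contains the $T$-path from $x_i$ to $t$. Likewise $T_{y_i}$ contains the $T$-path from $y_i$ to $t$. The union of these two paths contains the unique $(x_i,y_i)$-path in $T$, which is $C_T^{e}\setminus e$. Every vertex of $C_T^e$, including its endpoints, lies on this path, so $z$ lies on it. Therefore $z\in T_{x_i}\cup T_{y_i}$, meaning $B_z$ contains $x_i$ or $y_i$.

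Carrying this out for $i=1,\dots,r$ yields $r$ distinct vertices of $B_z$, one from each edge of $M$, and the bound follows. We expect no real obstacle. The only point to check is that we may pass from ``$z$ lies on the cycle'' to ``$z$ lies on the tree path $x_i\to y_i$''. This holds because the cycle consists of exactly that path plus the edge $e$, and $e$ has no interior vertices.
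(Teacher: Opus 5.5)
Your proof is correct and follows the paper's argument: you take the matching $M$ and a common vertex $z$ from Lemma~\ref{lem:3.2}, then use (T1), (T2) and the hypothesis $v\in T_v$ to show that each $V(C_T^{x_iy_i})$ lies in $T_{x_i}\cup T_{y_i}$, so $B_z$ meets every edge of $M$. Routing through a node $t$ with $x_i,y_i\in B_t$ simply spells out the step the paper states in one line.
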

\begin{proof}
Let $M \subseteq E(G_r) \setminus E(T)$ be a matching of size $r$ satisfying Lemma~\ref{lem:3.2}. Set $M := \{a_1b_1, \dots, a_rb_r\}$ and let $x \in \bigcap_{i=1}^r V(C_T^{a_ib_i})$. Since $a_i \in V(T_{a_i})$ and $b_i \in V(T_{b_i})$ for every $i \in [r]$, by (T1) and (T2), the unique $(a_i, b_i)$-path in $T$ is contained in $T_{a_i} \cup T_{b_i}$, that is, $V(C_T^{a_ib_i})\subseteq V(T_{a_i} \cup T_{b_i})$; hence, $a_i \in B_x$ or $b_i \in B_x$. So $|B_x| \geq r$, implying that the width of $(T, \cB)$ is at least $r-1$.
\end{proof}

\begin{lemma}\label{lem:2.5}(\cite{BCHHIM23}, Lemma 2.5.)
If $G$ is a connected graph with a tree decomposition $(T, \cB)$ with width $k$ such that $T$ is a minor of $G$, then there is a tree decomposition $(T', \cB')$ of $G$ with width $k$ such that $T'$ is a spanning tree of $G$.
\end{lemma}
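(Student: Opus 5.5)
The idea is to blow up each node of $T$ into a connected piece of $G$, using a minor model of $T$, and to give every vertex of that piece a copy of the corresponding bag. Since $T$ is a minor of $G$, there are pairwise disjoint vertex sets $X_t \subseteq V(G)$, one for each $t \in V(T)$. Each $G[X_t]$ is connected and nonempty, and for every edge $st \in E(T)$ there is an edge of $G$ between $X_s$ and $X_t$.

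First I would make these branch sets partition $V(G)$. While some vertex $w$ lies in no branch set, connectivity of $G$ gives such a $w$ adjacent to some vertex of some $X_t$. Add $w$ to that $X_t$. This keeps $G[X_t]$ connected, keeps the sets disjoint, and preserves the required edges between branch sets. So we may assume $\{X_t : t \in V(T)\}$ is a partition of $V(G)$.

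Next I would build $T'$.
\begin{itemize}
\item For each $t$, choose a spanning tree $F_t$ of $G[X_t]$.
\item For each edge $st \in E(T)$, choose one edge $e_{st}$ of $G$ between $X_s$ and $X_t$.
\item Let $T'$ be the union of all $F_t$ together with all $e_{st}$.
\end{itemize}
Contracting each $F_t$ in $T'$ yields exactly $T$. Hence $T'$ is connected and acyclic: a cycle would either lie inside some $F_t$ or project to a closed walk in $T$ that uses some tree edge exactly once. Since $T'$ covers every vertex of $G$, it is a spanning tree of $G$. Alternatively, count edges: $T'$ is connected with $\sum_t(|X_t|-1) + |V(T)|-1 = |V(G)|-1$ edges.

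For the bags, set $B'_w := B_t$ for every $w \in X_t$.
\begin{itemize}
\item (T1): if $uv \in E(G)$, then $u,v \in B_t$ for some $t$, and any $w \in X_t$ gives $u,v \in B'_w$.
\item (T2): for $v \in V(G)$, $T'_v$ is the subgraph of $T'$ induced on $\bigcup_{t \in V(T_v)} X_t$. This set is nonempty because $T_v$ is nonempty. It is connected because each $X_t$ is spanned by the subtree $F_t$, and for each edge $st$ of the connected subtree $T_v$ the edge $e_{st}$ joins $X_s$ to $X_t$.
\item Width: the bags of $(T',\cB')$ are exactly the bags of $(T,\cB)$, with repetitions, and every $X_t$ is nonempty. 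So the width is exactly $k$.
\end{itemize}

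I expect no real obstacle. The only step needing care is extending the minor model to a partition of $V(G)$, which is where connectivity of $G$ is used, together with the verification that $T'$ is acyclic.
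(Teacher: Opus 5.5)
Your proof is correct. You extend the minor model to a partition of $V(G)$ using connectivity, take a spanning tree of each branch set plus one connecting edge per edge of $T$, and copy $B_t$ to every vertex of $X_t$; this verifies (T1), (T2) and the width exactly as needed. The paper gives no proof of its own and simply cites Lemma 2.5 of Blanco et al., and your ``uncontract the tree and duplicate the bags'' construction is essentially the standard argument behind that lemma.
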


We are now ready to prove Theorem \ref{thm:1.4}, which is restated here for convenience.

\begin{theorem}\label{thm:final}
Let $k,r$ be integers with $k \geq 3$ and $r\geq k+3$. Then there exists a connected $k$-ghost-free graph $G_{k,r}$ with treewidth $k$ such that if $(T, \cB)$ is a tree decomposition of $G_{k,r}$ such that $T$ is a minor of $G_{k,r}$, then $(T, \cB)$ has width at least $r-2$.
\end{theorem}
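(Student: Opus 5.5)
Take $G_0:=K_{k+1}$ with two distinct vertices $u_0,v_0$, let $(G_r,u_r,v_r)$ be the $r$-th parallel connection of $(G_0,u_0,v_0)$, and set $G_{k,r}:=\widetilde{G_r}$, the tower-tree graph of $G_r$ with respect to $k$ and an arbitrary ordering of $V(G_r)$. The proof then has three parts: $G_{k,r}$ has the right structural properties, it is $k$-ghost-free, and it admits no narrow tree decomposition indexed by a minor.

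\textbf{Structure.} $G_{k,r}$ is connected because $G_r$ is connected (an easy induction) and the attached trees are connected. For the treewidth, $tw(G_{k,r})=\max\{tw(G_r),1\}$ as noted after the definition of the tower-tree graph. Since $G_r\supseteq K_{k+1}$, we have $tw(G_r)\ge k$. For the upper bound, the tree decompositions built in the proof of Lemma~\ref{lem:3.1} have width at most $k$. Alternatively, strong $k$-ghost-freeness implies by definition that $tw(G_r)\le k$.

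\textbf{Ghost-freeness.} By the note after the definition of strong $k$-ghost-freeness, $(K_{k+1},u_0,v_0)$ is strongly $k$-ghost-free. Applying Lemma~\ref{lem:3.1} $r$ times (this uses $k\ge 3$) shows that $(G_r,u_r,v_r)$ is strongly $k$-ghost-free, so in particular $G_r$ is $k$-ghost-free. Lemma~\ref{lem:2.2} then gives that $G_{k,r}$ is $k$-ghost-free.

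\textbf{Width bound.} Suppose $(T,\cB)$ is a tree decomposition of $G_{k,r}$ with $T$ a minor of $G_{k,r}$, and let $w$ be its width. The goal is $w\ge r-2$.
\begin{enumerate}
\item By Lemma~\ref{lem:2.5}, $G_{k,r}$ has a tree decomposition of width $w$ indexed by a spanning tree of $G_{k,r}$.
\item Apply Lemma~\ref{lem:2.1}. This lemma is stated for width at most $k$, but the tower-tree parameter only enters through the numbers $h_j,w_j$. I will take the tower-tree with respect to a parameter $k'$ large relative to $r$ (say $k'=r$) instead of $k$; this still yields a $k$-ghost-free graph of treewidth $k$, because Lemma~\ref{lem:2.2} and the treewidth identity do not depend on the parameter.
\item Assume for contradiction that $w\le r-3$, so $w\le k'$. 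Lemma~\ref{lem:2.1} then gives a tree decomposition of $G_r$ of width at most $w+1$, indexed by a spanning tree of $G_r$, with $v\in T_v$ for every $v$.
\item Lemma~\ref{lem:3.3} gives $w+1\ge r-1$, that is, $w\ge r-2$, contradicting $w\le r-3$.
\end{enumerate}

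\textbf{Main obstacle.} The delicate point is the width regime in which Lemma~\ref{lem:2.1} is invoked. The lemma needs the hypothetical width to be at most the tower parameter, and choosing that parameter large (e.g.\ $k'=r$) is what lets the argument cover every width $w<r-2$, not only $w\le k$. The hypothesis $r\ge k+3$ suggests the authors instead keep parameter $k$ and note that widths above $k$ already satisfy $r-2\le k+1\le w$ only in the tight case $r=k+3$. I would first try the large-parameter choice, and check that the tower-tree results hold for an arbitrary parameter.
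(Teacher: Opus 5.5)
You have the paper's construction and the paper's chain of lemmas: $K_{k+1}$, the $r$-th parallel connection via Lemma~\ref{lem:3.1}, the tower-tree, then Lemmas~\ref{lem:2.2}, \ref{lem:2.5}, \ref{lem:2.1} and~\ref{lem:3.3}. Your suspicion about the tower parameter is also justified. The paper builds $\widetilde{G}_r$ with respect to $k$, yet applies Lemma~\ref{lem:2.1} to a decomposition of width at most $r-3$ and concludes width at most $r-2$. That is Lemma~\ref{lem:2.1} with parameter $r-3$, so as written it is licensed only when $r=k+3$.

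\textbf{The gap is in your step 3.} For the tower-tree with respect to $k'$, Lemma~\ref{lem:2.1} turns a spanning-tree decomposition of width at most $k'$ into one of $G_r$ of width at most $k'+1$. The output bound is in terms of the parameter, not the actual width $w$. With $k'=r$ you only get width at most $r+1$. That is compatible with the lower bound $r-1$ from Lemma~\ref{lem:3.3}, so there is no contradiction.

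Your claim ``width at most $w+1$'' would need one of two things. Either the tower-tree is built with respect to $w$ itself, which is impossible because $G_{k,r}$ is fixed before $(T,\cB)$ is given. Or you reopen the proof of the cited lemma from \cite{BCHHIM23} and show that a tower-tree for parameter $k'$ also serves every smaller parameter. Neither follows from the statement you are citing.

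\textbf{The fix} is to take the parameter to be exactly $k'=r-3$, which is a positive integer since $r\geq k+3\geq 6$. A larger parameter is not better here. You need $k'\geq r-3$ so that the assumption $w\leq r-3$ meets the hypothesis of Lemma~\ref{lem:2.1}. You need $k'\leq r-3$ so that its output bound $k'+1$ lies below $r-1$.

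With $k'=r-3$, assume $w\leq r-3$. Lemmas~\ref{lem:2.5} and~\ref{lem:2.1} then give a decomposition of $G_r$ indexed by a spanning tree, with $v\in T_v$ for all $v$ and width at most $r-2$. This contradicts Lemma~\ref{lem:3.3}.

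The rest of your proposal is fine. As you say, Lemma~\ref{lem:2.2} and the identity $tw(\widetilde{G})=\max\{tw(G),1\}$ do not use the numbers $h_j,w_j$. The proof of Lemma~\ref{lem:2.2} only glues width-$1$ decompositions of the attached trees onto a decomposition of $G_r$. So the modified $G_{k,r}$ is still connected, $k$-ghost-free and of treewidth $k$.
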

\begin{proof}
Let $G_0$ be a graph with $u_0, v_0\in V(G_0)$ such that $(G_0,u_0, v_0)$ is strongly $k$-ghost-free. 
Let $(G_r,u_r, v_r)$ be a $r$-th parallel graph of $(G_0,u_0, v_0)$. Since $(G_0,u_0, v_0)$ is strongly $k$-ghost-free, so is $(G_r,u_r, v_r)$ by  Lemma~\ref{lem:3.1}. Moreover, since $tw(G_0) = k$, we have $tw(G_r) = k$. Let $\widetilde{G}_r$ be the tower-tree graph of $G_r$ with respect to $k$ and an ordering $a_1, \ldots, a_n$ of $V(G_r)$. Then $tw(\widetilde{G}_r) = \max \{tw(G_r), 1\} = k$. Since $G_r$ is $k$-ghost-free, so is $\widetilde{G}_r$ by Lemma~\ref{lem:2.2}. 

We claim that $\widetilde{G}_r$ satisfies Theorem \ref{thm:final}. Assume not. By Lemma~\ref{lem:2.5}, there is a tree decomposition $(T', \cB')$ of $\widetilde{G}_r$ with width at most $r-3$ such that $T'$ is a spanning tree of $\widetilde{G}_r$. Moreover, by Lemma~\ref{lem:2.1}, there is a tree decomposition $(T, \cB)$ of $G_r$ with width $\leq r-2$ such that $T$ is a spanning tree of $G_r$ and $v \in T_v$ for every $v \in V(G_r)$, which is a contradiction to Lemma~\ref{lem:3.3}.
\end{proof}

\section{Acknowledgments}
This research was partially supported by grants from the Natural Sciences Foundation of Fujian Province (grant 2025J01486).


\begin{thebibliography}{}
\bibitem{BCHHIM23}
P. Blanco, L. Cook. M. Hatzel, C. Hilaire, F. Illingworth, R. McCarty, On tree decompositions whose trees are minors, J. Graph Theory, 106 (2023): 296-306.

\bibitem{BRST91}
D. Bienstock, N. Robertson, P. Seymour, and R. Thomas, Quickly excluding a forest, J. Combin. Theory Ser. B. 52 (1991): 274-283.

\bibitem{Chen}
R. Chen, A counterexample to Hickingbotham's conjecture about $k$-ghost-edges, 2026, arXiv:2602.03016v1.

\bibitem{D19}
Z. Dvo\v r\'ak, Problem 20 from the Barbados graph theory workshop in 2019, 2019. https://sites.google.com/
site/sophiespirkl/open-problems/2019-open-problems-for-the-barbados-graph-theory-workshop

\bibitem{DO95}
G. Ding and B. Oporowski, Some results on tree decomposition of graphs, J. Graph Theory. 20 (1995):  481-499.

\bibitem{Hick19}
R. Hickingbotham, Graph minors and tree decompositions, B.Sc. (Honours) thesis, School of Mathematics,
Monash University, Melbourne, Australia, 2019. http://www.roberthickingbotham.com

\bibitem{NdP12}
J. Ne\v set\v ril and de P. O. Mendez, Bounded height trees and treedepth, Springer, Berlin, Heidelberg, 2012, pp. 115-144.






\end{thebibliography}
\end{document}